\newtheorem{thm}{Theorem}
\newtheorem{rem}{Remark}
\begin{document}

\title{Homogeneous Dual Ramsey Theorem}

\author{\textsc{Jos\'e G. Mijares}}\address{Department of Mathematics. California State University, Los Angeles. 5151 State University Drive 
Los Angeles, CA 90032. goyomijares@gmail.com}

\maketitle

\begin{abstract}
For positive integers $k < n$ such that $k$ divides $n$, let $(n)^k_{\hom}$ be the set of homogeneous $k$-partitions of $\{1, \dots, n\}$,  that is, the set of partitions of $\{1, \dots, n\}$ into $k$ classes of the same cardinality. In \cite{KST} the following question (Problem 7.3 in \cite{KST}) was asked: 

\medskip

Is it true that given positive integers $k < m$ and $N$ such that $k$ divides $m$, there exists a number $n>m$ such that $m$ divides $n$, satisfying that for every coloring $(n)^k_{\hom}=C_1\cup\dots\cup C_N$ we can choose $u\in (n)^m_{\hom}$ such that $\{t\in (n)^k_{\hom}: t\mbox{ is coarser than } u\}\subseteq C_i$ for some $i$?

\medskip

 In this note we give a positive answer to that question in Theorem \ref{hdrt} below. This result turns out to be a homogeneous version of the finite Dual Ramsey Theorem of Graham-Rothschild \cite{grahro}. As explained in \cite{KST}, our result also proves that the class $\mathcal{OMBA}_{\mathbb Q_2}$ of naturally ordered finite measure algebras with measure taking values in the dyadic rationals has the Ramsey property.
\end{abstract}

\bigskip


\section{Homogeneous Dual Ramsey Theorem}

\subsection{\bf Homogeneous partitions.} Let $k<n$ be positive integers. As customary,  write $k | n$ if $k$ divides $n$, and in that case let 

 \begin{equation}(n)^k_{\hom} =  \mbox{the set of all homogeneous } k\mbox{-partitions of } \{1, \dots, n\};
  \end{equation}  that is, the set of all the partitions of $\{1, \dots, n\}$ into $k$ classes of the same cardinality. If $t$ and $u$ are partitions of $\{1, \dots, n\}$, not necessarily homogeneous, then we say that $t$ is \textit{coarser than} $u$ if every class of $u$ is a subset of some class of $t$. Denote by $(u)^k_{\hom}$ the set of all homogeneous $k$-partitions of $\{1, \dots, n\}$ that are coarser than $u$.

\bigskip

\begin{rem}
In \cite{KST}, the notation $EQ_k^{\hom}$ is used to denote $(n)^k_{\hom}$. 
\end{rem}

The following is our main result.

\begin{thm}[Homogeneous Dual Ramsey Theorem]\label{hdrt}
Let integers $k < m$ and $N$ be given, with $k|m$. There exists a positive integer  $n>m$ such that $m|n$, satisfying that for every coloring $(n)^k_{\hom}=C_1\cup\dots\cup C_N$ there exists $u\in (n)^m_{\hom}$ such that $(u)^k_{\hom}\subseteq C_i$, for some $i\in\{1, \dots, N\}$.
\end{thm}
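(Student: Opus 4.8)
The plan is to treat the theorem as an internal Ramsey statement about the ``space'' of homogeneous partitions ordered by coarsening, and then to run the Graham--Rothschild induction with the equal-block-size constraint carried along throughout. First I would record the self-similarity of the structure, which is exactly what turns the freedom to enlarge $n$ into a usable resource. Fix $w\in (n)^M_{\hom}$ with blocks $B_1,\dots,B_M$, where $k\mid M\mid n$. Sending a homogeneous $k$-partition $\pi$ of $\{1,\dots,M\}$ to the partition of $\{1,\dots,n\}$ whose $j$-th class is $\bigcup_{i\in\pi_j}B_i$ defines a bijection $\Psi_w\colon (M)^k_{\hom}\to (w)^k_{\hom}$; it carries $(M)^m_{\hom}$ onto $(w)^m_{\hom}$ and respects coarsening, so that $\Psi_w$ maps $(\sigma)^k_{\hom}$ onto $(\Psi_w(\sigma))^k_{\hom}$ for every $\sigma\in (M)^m_{\hom}$. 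Thus $(w)^k_{\hom}$, with the induced coloring, is a faithful copy of a coloring of $(M)^k_{\hom}$. Two consequences I would use freely: existence of a valid $n$ is monotone under passing to multiples (pull a coloring back through some $\Psi_{w_0}$), so witnesses may be taken as divisible as convenient; and colorings can be pushed up and down these copies without disturbing the coarsening order.

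Using this, I would reduce the number of colors to two (write $T(k,m,N)$ for the statement of the theorem with parameters $k\mid m$ and $N$). Arguing by induction on $N$, let $M$ be a witness for $T(k,m,N-1)$, chosen a multiple of $m$, and let $n$ be a witness for the two-color instance $T(k,M,2)$, so that $M\mid n$. Given an $N$-coloring of $(n)^k_{\hom}$, merge colors $2,\dots,N$ and apply $T(k,M,2)$ to obtain $w\in (n)^M_{\hom}$ with $(w)^k_{\hom}$ monochromatic for the merged coloring. If the surviving color is $1$, any $u\in (w)^m_{\hom}$ works, since $(u)^k_{\hom}\subseteq (w)^k_{\hom}$. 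Otherwise $(w)^k_{\hom}$ carries an honest $(N-1)$-coloring, and through the isomorphism $\Psi_w$ the inductive hypothesis supplies $u\in (w)^m_{\hom}$ with $(u)^k_{\hom}$ monochromatic. This isolates the two-color statement $T(k,m,2)$, for all $k\mid m$, as the only remaining ingredient.

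For that core I would deliberately not invoke \cite{grahro} as a black box. The Graham--Rothschild theorem produces an intermediate partition whose blocks need not have equal size, and extending our coloring arbitrarily to unbalanced partitions pollutes precisely the coarsenings we must control, so the homogeneous conclusion is destroyed. Instead I would redo the argument inside the balanced category, proving a balanced (``equal-slices'') form of the Hales--Jewett / parameter-set pigeonhole: viewing the $M$ blocks of a candidate $w$ as coordinates and a homogeneous $k$-coarsening as a balanced word in $\{1,\dots,k\}^M$ in which each letter occurs exactly $M/k$ times, one seeks a high-dimensional balanced combinatorial subspace on which all such words receive the same color, obtained by an induction that at each amalgamation step preserves equality of block sizes.

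The main obstacle is exactly this preservation of balance. The standard inductive amalgamation that drives the finite Dual Ramsey Theorem does not respect the equal-size constraint, so the pigeonhole cannot be quoted and must be re-established from scratch with homogeneity built into every step; this balanced Hales--Jewett argument is where essentially all of the work lies. Once it is in hand, combining it with the color reduction above yields $T(k,m,N)$ for all $k\mid m$ and all $N$, the base cases $k=1$ and $N=1$ being trivial.
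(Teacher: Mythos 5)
Your reductions are sound as far as they go: the coarsening-preserving bijection $\Psi_w\colon (M)^k_{\hom}\to (w)^k_{\hom}$ is correct (a homogeneous $k$-coarsening of a homogeneous $w$ must be a union of exactly $M/k$ blocks of $w$ per class), and the induction on the number of colors, combining $T(k,M,2)$ with $T(k,m,N-1)$ pushed through $\Psi_w$, validly reduces the theorem to its two-color instances. Your diagnosis that the finite Graham--Rothschild theorem cannot simply be quoted (because the intermediate $m$-partition it produces need not be homogeneous) is also accurate. But this is where the proposal stops being a proof: the ``balanced Hales--Jewett / equal-slices pigeonhole'' you isolate as the core is never proved, only named, and you concede that ``this is where essentially all of the work lies.'' Moreover, under your own correspondence $\Psi_w$, balanced words in $\{1,\dots,k\}^M$ are exactly homogeneous $k$-coarsenings and balanced $m$-dimensional subspaces are exactly homogeneous $m$-coarsenings, so the core statement is not a simpler lemma but a restatement of $T(k,m,2)$ itself. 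The net content of the proposal is therefore the routine reduction from $N$ colors to $2$; the theorem proper --- which was open (Problem 7.3 of \cite{KST}) precisely because no balance-preserving amalgamation argument was known --- remains unproven.

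It is worth contrasting this with how the paper closes that gap: it makes no attempt at a finite, balance-preserving induction. Instead it deduces the finite homogeneous statement from the \emph{infinite} Dual Ramsey Theorem of Carlson--Simpson (Theorem \ref{borel parameter words ramsey}). The key observation is that in the approximation scheme for $(\omega)^k$, every extension step past level $k$ adds exactly one new point to each of the $k$ classes (equation (\ref{s-approx-2})), so the finite partition read off between the root $a=s(k,X)$ and a later approximation $s(i,X)$ is automatically homogeneous: balance comes for free from the infinite structure, with nothing to preserve inductively. Assuming the theorem fails, the paper glues the assumed bad colorings for all $n$ into a single Borel coloring of $(\omega)^k$ via $X\mapsto t(b_X)$, applies Carlson--Simpson to get a monochromatic $(A)^k$, and then reads off from an $m$-coarsening $Y\le A$ a homogeneous $u\in(n)^m_{\hom}$ whose $k$-coarsenings all arise as $t(b_X)$ for suitable $X\le Y$, contradicting badness. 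To complete your approach you would either have to carry out the balanced amalgamation in full --- which is not known to be routine --- or, as the paper does, trade the finite pigeonhole for the infinite theorem, where the balance obstruction disappears.
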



We will prove Theorem \ref{hdrt} in Section \ref{proof} below. In our proof, we will make use of the infinite Dual Ramsey Theorem due to Carlson and Simpson \cite{CarSim}, which we will state in the next subsection.

 \subsection{\bf Dual Ramsey Theorem.}  We will essentially go back to the notation introduced in \cite{CarSim}.  Let $\mathbb{N} = \{1, 2, 3, \dots \}$ be the set of positive integers and let $(\omega)^{\omega}$ be the set of all the infinite partitions $A = \{A_i : i\in\mathbb{N}\}$ of $\mathbb{N}$ into infinite classes such that 
 \begin{equation}
 i<j \Longrightarrow \min (A_i) < \min (A_j).
 \end{equation}
 Given $A, B\in (\omega)^{\omega}$, we say that $A$ is \textit{coarser} than $B$ if every class in $B$ is a subset of some class in $A$. Pre-order $(\omega)^{\omega}$ as follows:
 
 \begin{equation}A\leq B \Longleftrightarrow \ A\ \mbox{is coarser than}\ B. \end{equation}
 Likewise, given a positive integer $k$, let $(\omega)^k$ be the set of all the partitions \linebreak $X = \{X_i : 1\leq i\leq k\}$ of $\mathbb N$ into $k$ infinite classes satisfying 
 
 \begin{equation}
 i<j \Longrightarrow \min (X_i) < \min (X_j),\ \ \mbox{ for  } 1\leq i, j\leq k.
 \end{equation}

 \medskip
 
\noindent  Given $X,Y\in (\omega)^k$ and $A\in (\omega)^{\omega}$, define ``$X$ is \textit{coarser} than $Y$"  (resp. ``$X$ is \textit{coarser} than $A$")  in the same way as above; and write $X\leq Y$ (resp. $X\leq A$). For $A\in (\omega)^{\omega}$, let $(A)^k = \{X\in (\omega)^k : X\leq A\}$.
 
 \medskip
 
 \noindent We will regard the sets $(\omega)^{\omega}$ and $(\omega)^k$ as topological spaces with the usual topologies as defined in \cite{CarSim}.
 
\begin{thm}[Infinite Dual Ramsey Theorem; Carlson-Simpson \cite{CarSim}]\label{borel parameter words ramsey} Let $k$ be a positive integer. For every finite Borel-measurable coloring of $(\omega)^k$ there exists $A\in(\omega)^{\omega}$  such that $(A)^k$ is monochromatic.
\end{thm}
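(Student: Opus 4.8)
The plan is to prove Theorem \ref{borel parameter words ramsey} by the combinatorial-forcing method of Carlson and Simpson, organizing $(\omega)^\omega$ as a topological Ramsey space whose underlying pigeonhole principle is the finite Dual Ramsey Theorem of Graham--Rothschild \cite{grahro}. First I would recast everything in terms of rigid surjections: a partition $A=\{A_i:i\in\n\}\in(\omega)^{\omega}$ is coded by the surjection $f_A:\n\to\n$ sending each $j$ to the index $i$ with $j\in A_i$, and the $\min$-ordering condition makes $f_A$ \emph{rigid} (the least element mapped to $i$ increases with $i$). Likewise $X\in(\omega)^k$ is a rigid surjection onto $\{1,\dots,k\}$, and $X\le A$ holds precisely when $X$ factors through $A$, i.e. $X=g\circ f_A$ for a rigid surjection $g$ on the block-indices of $A$. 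Thus $(A)^k$ is canonically a copy of $(\omega)^k$ carried by the blocks of $A$, and the coarsening order becomes the natural ``subspace'' relation. The topology on $(\omega)^k$ is generated by the cylinders $[s]$, where $s$ ranges over rigid surjections of an initial segment $\{1,\dots,n\}$ onto some $\{1,\dots,j\}$ with $j\le k$, and $[s]$ collects those $X$ restricting to $s$ on $\{1,\dots,n\}$.

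The combinatorial engine is a one-step pigeonhole: for a finite approximation $s$ and a finite colouring of the ways to extend $s$ ``by one more block'' inside a fixed $A$, there is a refinement $B\le A$ compatible with $s$ along which all such one-block extensions receive the same colour. Unwinding the definitions, the relevant finite instance of this statement is exactly the finite Dual Ramsey Theorem of Graham--Rothschild: for every $N$ there is $n$ so large that any $N$-colouring of the rigid surjections $\{1,\dots,n\}\to\{1,\dots,k\}$ admits a rigid surjection $\{1,\dots,n\}\to\{1,\dots,r\}$, for suitably large $r>k$, all of whose $k$-block coarsenings get a single colour. I expect this verification --- extracting the precise finite statement and pulling the length $n$ out of Graham--Rothschild --- to be the main obstacle, since it is where all the genuine combinatorics is concentrated.

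With the pigeonhole in hand I would run the Galvin--Prikry/Nash--Williams combinatorial forcing. For a fixed Borel set $\mathcal C\subseteq(\omega)^k$, and each approximation $s$ with compatible $A$, define ``$A$ \emph{accepts} $s$'' to mean that every $X\in(A)^k$ extending $s$ lies in $\mathcal C$, and ``$A$ \emph{rejects} $s$'' dually; the pigeonhole shows every $A$ can be refined to one that \emph{decides} each relevant $s$, and a fusion (diagonalization through a $\le$-decreasing sequence in $(\omega)^{\omega}$) yields a single $A$ deciding all approximations coherently. For clopen, and then open, $\mathcal C$ this already produces $A$ with $(A)^k\subseteq\mathcal C$ or $(A)^k\cap\mathcal C=\emptyset$; the passage to arbitrary Borel $\mathcal C$ is the usual transfinite induction along the Borel hierarchy, showing that the family of sets ``Ramsey below some $A$'' contains the open sets and is closed under complementation and countable combination (equivalently, every Borel set has the Baire property in the Ellentuck topology of the space and is therefore Ramsey), which is the technical heart of \cite{CarSim}. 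Finally, given $(\omega)^k=C_1\cup\dots\cup C_N$, I would apply this to $C_1,\dots,C_N$ in turn and fuse the resulting refinements into one $A\in(\omega)^{\omega}$; then for each $i$ either $(A)^k\subseteq C_i$ or $(A)^k\cap C_i=\emptyset$, and since $(A)^k$ is nonempty and covered by the $C_i$ we conclude $(A)^k\subseteq C_i$ for some $i$, i.e. $(A)^k$ is monochromatic.
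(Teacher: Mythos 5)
The paper never proves Theorem \ref{borel parameter words ramsey}: it is imported wholesale from Carlson--Simpson \cite{CarSim} and used as a black box in the proof of Theorem \ref{hdrt}, so there is no in-paper argument to compare yours against. Judged on its own merits, your sketch is a faithful outline of the known proof, essentially the argument of \cite{CarSim} as systematized in the topological Ramsey space framework of \cite{todo}: coding partitions by rigid surjections, the cylinder topology on finite approximations, a one-step pigeonhole, Galvin--Prikry-style combinatorial forcing with accept/reject and fusion, and the dual Ellentuck theorem to capture all Borel sets. Two caveats are worth recording. First, the finite statement your pigeonhole needs is not literally Theorem \ref{drt} (the partition form of the Dual Ramsey Theorem): a one-block extension of an approximation $s$ inside a fixed $A$ is a \emph{partial} rigid surjection in which the blocks of $s$ are frozen and the new block plays the role of a variable, so the required finite input is the Graham--Rothschild theorem for $n$-parameter sets (equivalently, for parameter words) from \cite{grahro}, of which the partition statement is a special case; since \cite{grahro} proves the parameter-set theorem by purely finitary means, there is no circularity in placing it below the infinite theorem, and this is indeed how the pigeonhole axiom is verified for the Carlson--Simpson space in \cite{todo}. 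Second, your phrase ``closed under complementation and countable combination'' is not true of the completely Ramsey sets taken naively --- a direct transfinite induction up the Borel hierarchy stalls at complements --- and the correct route is the one you mention parenthetically: prove that a set is completely Ramsey if and only if it has the Baire property in the dual Ellentuck topology, note that metrically open sets are Ellentuck open, and use that the Baire-property sets form a $\sigma$-algebra. With that reading, which is exactly how \cite{CarSim} proceed, your outline is sound; the extraction of the precise finite pigeonhole from \cite{grahro} and the fusion bookkeeping, which you honestly flag as the main obstacle, are where all the remaining work lies.
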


For positive integers $k$ and $n$, let 

 \begin{equation} (n)^k = \mbox{ the set of all the } k\mbox{-partitions of } n, \end{equation} 
 
\noindent  i.e., partitions of $\{1,\dots, n\}$ into $k$ classes. For every such partition $u$, denote by $(u)^k$ the set of all $k$-partitions of $\{1, \dots, n\}$ that are coarser than $u$. Also, let $(<\omega)^k = \bigcup_{n\in\mathbb{N}}(n)^k$. 
 

\medskip


Theorem \ref{hdrt} is a homogeneous version of the following well-known result due to Graham and Rothschild, which can be obtained as a corollary of Theorem \ref{borel parameter words ramsey}:

\begin{thm}[Dual Ramsey Theorem; Graham-Rothschild \cite{grahro}]\label{drt}
	Let integers $k< m$ and $N$ be given. There exists a number $n>m$ satisfying that for every coloring $(n)^k=C_1\cup\dots\cup C_N$ there exists $u\in (n)^m$ such that $(u)^k\subseteq C_i$ for some $i\in\{1,\dots, N\}$.
\end{thm}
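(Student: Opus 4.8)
The plan is to derive the finite Dual Ramsey Theorem (Theorem \ref{drt}) from the infinite Dual Ramsey Theorem (Theorem \ref{borel parameter words ramsey}) by a standard compactness-type argument, arguing by contradiction. Suppose the conclusion fails for some fixed $k < m$ and $N$. Then for \emph{every} integer $n > m$ there is a ``bad'' coloring $c_n : (n)^k \to \{1, \dots, N\}$ admitting no $u \in (n)^m$ with $(u)^k$ monochromatic. The goal is to assemble these finite bad colorings into a single Borel coloring of the infinite space $(\omega)^k$ that contradicts Theorem \ref{borel parameter words ramsey}.

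First I would set up the combinatorial dictionary between the finite and infinite pictures. The key observation is that any partition $X \in (\omega)^k$ induces, for each $n \geq 1$, a finite $k$-partition of $\{1, \dots, n\}$ simply by restricting each class $X_i$ to $\{1, \dots, n\}$; write $X \restriction n$ for this restriction, discarding empty classes or, more carefully, recording the induced partition on the initial segment. One checks that the map $X \mapsto X \restriction n$ is continuous (it depends only on finitely many coordinates of $X$, namely the membership of $1, \dots, n$), so any coloring of $(\omega)^k$ that factors through such a restriction is automatically Borel — indeed clopen. The plan is then to define a coloring $\chi$ of $(\omega)^k$ by pulling back the finite bad colorings along these restriction maps in a coherent way.

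The main technical step is to produce a \emph{single} coloring of $(\omega)^k$ from the infinite family $\{c_n\}$ so that monochromaticity on an infinite cube $(A)^k$ forces monochromaticity on some finite cube $(u)^k$ with $u \in (n)^m$, yielding the contradiction. The cleanest route is a König's-lemma / diagonalization argument: using that each $(n)^k$ is finite, extract from the family $\{c_n : n > m\}$ a ``limit'' coloring $c_\infty$ of $(<\omega)^k = \bigcup_n (n)^k$ that is coherent (its restriction to each $(n)^k$ is a bad coloring for infinitely many larger $n$ simultaneously), so that $c_\infty$ still has no monochromatic $(u)^k$ for any $u \in (m')^m$, any $m'$. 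Then define $\chi(X) = c_\infty(X \restriction n_X)$ for a suitable $n_X$; the natural choice is to let $n_X$ be the least $n$ such that $X \restriction n$ already splits $\mathbb N$ into exactly $k$ nonempty classes, i.e.\ $n_X = \max_i \min(X_i)$, making $\chi$ depend only on the finite ``stem'' of $X$ and hence Borel.

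Applying Theorem \ref{borel parameter words ramsey} to $\chi$ yields $A \in (\omega)^{\omega}$ with $(A)^k$ $\chi$-monochromatic, say of color $i$. The final step is to descend: choose $n$ large enough that the first $m$ classes of $A$ are ``activated'' below $n$, and let $u \in (n)^m$ be the finite $m$-partition that $A$ induces on $\{1, \dots, n\}$ (using the $m$ coarsest blocks determined by the minima of the classes of $A$). Every $t \in (u)^k$ is the restriction of some $X \in (A)^k$, so $c_n(t) = \chi(X) = i$ for all such $t$, i.e.\ $(u)^k \subseteq C_i$ — contradicting that $c_n$ was a bad coloring. The step I expect to be the main obstacle is the coherence extraction: ensuring the limit coloring $c_\infty$ genuinely inherits the ``no monochromatic $(u)^k$'' property at every finite level and that the restriction maps between the finite and infinite partition spaces respect the coarsening order exactly, so that finite monochromatic cubes correspond precisely to restrictions of infinite ones. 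Handling the bookkeeping of minima (the normalization $i < j \Rightarrow \min(X_i) < \min(X_j)$) so that restrictions land in the correct finite spaces is where the argument must be carried out with care.
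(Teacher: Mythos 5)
Your overall strategy --- derive Theorem \ref{drt} from Theorem \ref{borel parameter words ramsey} by encoding a putative family of bad finite colorings into a Borel coloring of $(\omega)^k$, applying Carlson--Simpson, and descending to a finite cube --- is exactly the route the paper indicates (and the one its proof of Theorem \ref{hdrt} carries out in the homogeneous setting). But your execution has a genuine gap at the descent step. You color $X\in(\omega)^k$ by its minimal stem: $\chi(X)=c_\infty(X\restriction n_X)$ with $n_X=\max_i\min(X_i)=\min(X_k)$. Observe that $X\restriction n_X$ is always a partition of $\{1,\dots,n_X\}$ whose last class is the singleton $\{n_X\}$. Now take your $u\in(n)^m$ induced by $A$ and an arbitrary $t\in(u)^k$; the partition $X\in(A)^k$ with $X\restriction n=t$ (obtained by merging classes of $A$ according to $t$) satisfies $n_X=\min(t_k)$, which is strictly smaller than $n$ unless $t_k=\{n\}$. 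Hence $\chi(X)=c_\infty\bigl(t\restriction\min(t_k)\bigr)$, not $c_\infty(t)$, and your concluding equality ``$c_n(t)=\chi(X)=i$'' is unjustified and in general false: monochromaticity of $\chi$ on $(A)^k$ only controls the colors of finite partitions of the special form ``last class a singleton sitting at the top,'' and such partitions never exhaust any cube $(u)^k$. No coherence property of $c_\infty$ can repair this, since for generic $t$ the stem $t\restriction\min(t_k)$ does not determine $t$, and the canonical end-extension under which one could make the $c_n$ coherent (dumping the tail into the first class) disagrees with how $t$ distributes the interval $(\min(t_k),n]$ among all $k$ of its classes.

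What is missing is a synchronization device: the length at which $\chi$ reads $X$ must (i) be recoverable from $X$ alone, so that $\chi$ is Borel, and (ii) be constant as $t$ ranges over the whole cube $(u)^k$ realized inside $(A)^k$, so that a single level $n$ and a single bad coloring $c_n$ get contradicted. The minimal stem satisfies (i) but badly fails (ii). This is precisely what the paper's proof of Theorem \ref{hdrt} engineers with the approximation function $s(i,X)$, the index $i_0$ chosen by a divisibility condition, and the correspondence $t\mapsto b_t$: the color of $X$ is determined by the pattern of $s(i_0,X)$ \emph{beyond} the stem $s(k,X)$, and in the descent $X$ is built from $Y\in(A)^m$ by merging classes so that this pattern is exactly the code $b_t$ of $t$, with matching length $n(X)=n$; an analogous device (reading a canonically chosen longer approximation invariant under the merging operation) is what your argument needs for the non-homogeneous Theorem \ref{drt}. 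Incidentally, your K\"onig's-lemma extraction is superfluous: the sets $(n)^k$ are pairwise disjoint (a partition determines its ground set), so $c_\infty:=\bigcup_{n>m}c_n$ is well defined and each level retains its badness with no limit argument at all; the real difficulty is not coherence of the limit coloring but the synchronization in (ii), which your closing paragraph flags as ``bookkeeping of minima'' but, with $n_X$ as you defined it, cannot be carried out.
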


\medskip


 \section{Proof of Theorem \ref{hdrt}}\label{proof}

Let $A =  \{A_i : i\in\mathbb{N}\}\in (\omega)^{\omega}$ be given. We will borrow some notation from  \cite{todo}. Define 
 \begin{equation}r(0, A) = \emptyset\ \  \&\ \ r(i,A) = \{A_j\cap\{1, \dots,  \min (A_i)\} : 1\leq j\leq i\}; \ i>0. \end{equation}

\noindent Note that $r(i,A)$ is a partition of $\{1, \dots,  \min (A_i)\}$. We think of it as the $i$-th approximation of $A$. If a finite partition $b$ is the $i$-th approximation of some $B\leq A$, we write $b\in(<\omega)^{i}\upharpoonright A$.

It will be useful to understand elements $A= \{A_i : i\in\mathbb{N}\}\in  (\omega)^{\omega}$ as surjective functions\linebreak $A: \mathbb N\rightarrow \mathbb N$ where $A_i = A^{-1}(\{i\})$. Likewise, elements  $X = \{X_i : 1\leq i\leq k\}\in  (\omega)^k$ will be as well understood as surjective functions $X: \mathbb N\rightarrow \mathbb \{1, \dots k\}$ where $X_i = X^{-1}(\{i\})$, for $1\leq i\leq k$.  In the same spirit, we will regard partitions $u$ of a finite set $F\neq\emptyset$ into $k$ classes as surjections $u : F\rightarrow \{1,\dots, k\}$. We will shift between understanding partitions as surjective functions and sets of disjoint classes throughout the rest of this article. Now, keep that in mind and fix a positive integer $k$ for a while. We will borrow some notation and ideas from \cite{DM}. Given $A\in  (\omega)^{\omega}$, let $\pi: (\omega)^{\omega}\rightarrow (\omega)^{k}$ be defined as follows: 
 
 \begin{equation}
 \pi(A)(i) =\begin{cases}
 A(i) & \text{if  $1\leq A(i)\leq k$}\\
 1 & \text{otherwise}.\end{cases}
  \end{equation}
 
 Notice that $\pi$ is a surjection. We understand $\pi$ as a projection function from $(\omega)^{\omega}$ onto $(\omega)^{k}$.  Following \cite{DM}, define an approximation function $s$ with domain  $\{0\}\cup\mathbb N\times (\omega)^{k}$ as follows. For  $i\in\{0\}\cup\mathbb N$ and  $X = \{X_1, \dots, X_k\}\in(\omega)^{k}$, the output $s(i,X)$ is a $k$-partition of some finite subset of $\mathbb N$ whose $j$-th class, $s(i,X)_j$ ($1\leq j\leq k$), will be defined by cases as follows:
 
 \medskip
 
\noindent For $i\leq k$, let 
  \begin{equation}\label{s-approx-1}
 s(i,X)_j = X_j\cap\{1, \dots, \min (X_k)\}
   \end{equation}

\noindent  So, for $i\leq k$, $ s(i,X)_j $ is the unique initial segment of $X_j$ included as a subset in $\{1, \dots, \min (X_k)\}$.  Note that for any $A\in (\omega)^{\omega}$ such that $\pi(A)=X$, if $i\leq k$ we have $s(i,X) = r(i,A)$. In particular, $s(0, X) = \emptyset$ and for $0<i\leq k$, $s(i,X)$ is a partition of $ \{1, \dots, \min (X_i)\}$.
 
 \medskip

 \noindent For $i>k$, let
  \begin{equation}\label{s-approx-2}
 s(i,X)_j = s(i-1,X)_j \cup \{\min (X_j \setminus s(i-1,X)_j)\}.
 \end{equation}
 
\noindent i.e., for $i>k$,  we obtain the class $s(i,X)_j $ by adding to $s(i-1,X)_j$ the minimum element of  $X_j$ that is not an element of $s(i-1,X)_j$. (Therefore, in this case,  $s(i,X)_j $ is as well an initial segment of $X_j$ like in the case $i\leq k$). This completes the definition of the approximation function $s$.  We think of $s(i,X)$ as the $i$-th approximation of $X$. Actually, each $X\in (\omega)^{k}$ can be identified with the sequence $(s(i,X))_{i\in\mathbb N}$ of its approximations.  

\medskip

Let $\mathcal S_k$ denote the range of $s$. We have 
 
 \begin{equation}
 (<\omega)^k\subset \mathcal S_k,\ \text{but } \mathcal S_k\setminus  (<\omega)^k\neq\emptyset.
  \end{equation}

   \bigskip
 
 Now, note that for every $a\in (<\omega)^k$ the union of $a$ is an initial segment of $\mathbb N$. That is not necessarily true for all elements of $\mathcal S_k$. Given $b\in\mathcal S_k$, if the union of $b$ is an initial segment of $\mathbb N$, then denote by $\#b$ the unique $l\in\mathbb N$ such that $b$ is a partition of $\{1, \dots, l\}$. Denote by $\cup b$ the union of $b$. Let 
 
  \begin{equation}
  \mathcal S_k^{\#} = \{b\in \mathcal S_k : \cup b \mbox{ is an initial segment of } \mathbb N\}.
  \end{equation}
 
 \medskip

 If  $a\in (<\omega)^k$ and  $b\in  \mathcal S_k$, write 
 
  \begin{equation} 
  a=s_k(b)
   \end{equation}  if there exist $X\in (\omega)^k$  and an integer $i > k$ such that $s(k,X)=a$ and $s(i,X)=b$. Now, let an integer $n$ with $k| n$ and $a\in (<\omega)^k$ be given. There exists a bijective correspondence between  $(n)^k_{\hom}$ and  the set  
 
   \begin{equation}
   T(a, k, n) =\{b\in \mathcal S_k^{\#} : a\neq b,\ a=s_k(b),\ \# b=\# a+n\}.
    \end{equation}

\noindent To see that such a correspondence exists, fix a bijective function  \begin{equation} \varphi : \{1, \dots, n\}\rightarrow \{\# a +1, \dots, \# a + n\}. \end{equation} Now, given $t\in (n)^k_{\hom}$, define $b_t\in T(a, k, n) $ by

\begin{equation}\label{b-approx}
 b_t(j) =\begin{cases}
a(j) & \text{if \ $1\ \leq j\leq \# a$}\\
t(\varphi^{-1}(j)) & \text{if $\# a < j\leq \# a +n$}\\
 \end{cases} 
 \end{equation}
 
 \noindent Then the correspondence   \begin{equation} t \mapsto b_t  \end{equation} is bijective. Obviously, the inverse of a partition $b\in  T(a, k, n)$ under this correspondence is the partition $t\in (n)^k_{\hom}$ defined by 
 
  \begin{equation}
  t(j) = b(\varphi(j)),\ \ 1\leq j\leq n.
   \end{equation}
 
 \medskip
 
From now on, for all positive integers  $k<n$ with $k|n$ and every $a\in (<\omega)^k$ we will fix one such bijective correspondence  $t \mapsto b_t$ between $(n)^k_{\hom}$ and $T(a, k, n)$ throughout the rest of this section. 

 \medskip
 
 Now we are ready to prove our main result.

\begin{proof}[Proof of Theorem \ref{hdrt}]
Fix positive integers $k<m$ and $N$,  with $k|m$, and suppose that the conclusion in the statement fails. For each positive integer $n>m$ such that $m|n$, choose a coloring $(n)^k_{\hom}=C^n_1\cup\dots\cup C^n_N$ admitting no monochromatic set of the form $(u)^k_{\hom}$ with $u\in (n)^m_{\hom}$.

\medskip

Given $X\in (\omega)^k$, let $a=s(k,X)$. Let 
 \begin{equation}
 i_0 =  \min\{ i>m : m \mbox{ divides } |\cup s(i,X)| - \# a \}.
 \end{equation} 
 
 \noindent Here, for $i>m$, $|\cup s(i,X)|$ denotes the cardinality of the union of the partition $s(i,X)$. Define the positive integer
 
  \begin{equation}
  n(X)=  |\cup s(i_0,X)| - \# a
   \end{equation} 
   
   \noindent Note that $k|n(X)$. List the elements of\  $\cup s(i_0,X)$ in their natural increasing order as\linebreak  $x_1< \dots < x_{\#a + n(X)}$, and let $b_X\in T(a,k,n(X))$ be defined by

\begin{equation}\label{b-approx}
 b_X(j) =\begin{cases}
a(j) & \text{if \ $1\ \leq j\leq \# a$}\\
s(i_0,X)(x_j) & \text{if $\# a < j\leq \# a +n(X)$}\\
 \end{cases} 
 \end{equation}
 
\noindent Denote by $t(b_X)$ the unique element $t\in (n(X))^k_{\hom}$ such that $b_t = b_X$. Define a coloring $(\omega)^k=C_1\cup\dots\cup C_N$ as follows: 

 \begin{equation}
 X\in C_j  \mbox{ if and only if } t(b_X)\in C^{n(X)}_j.
  \end{equation}

 \noindent Note that each $C_j$ is Borel. So by Theorem \ref{borel parameter words ramsey} there exists $A\in (\omega)^{\omega}$ such that $(A)^k\subseteq C_{j_0}$ for some $j_0\in\{1, \dots, N\}$. Fix $a_0\in (<\omega)^{m}\upharpoonright A$ and let $Y\in (A)^m$ be such that $s(m,Y)=a_0$. Let 
 
  \begin{equation}
  l =  \min\{ i>m : m \mbox{ divides } |\cup s(i,Y)| - \# a_0 \} \end{equation}  and set $n= |\cup s(l,Y)| - \# a_0$.  List the elements of\  $\cup\, s(l,Y)$ in their natural increasing order as $y_1< \dots < y_{\#a_0 + n}$. Define $b_Y\in T(a_0,m,n)$ by

\begin{equation}\label{b-approx}
 b_Y(j) =\begin{cases}
a_0(j) & \text{if \ $1\ \leq j\leq \# a_0$}\\
s(l,Y)(y_j) & \text{if $\# a_0 < j\leq \# a_0 +n$}\\
 \end{cases} 
 \end{equation}

\noindent Now denote by $u$ the unique element $t\in (n)^m_{\hom}$ such that $b_t = b_Y$.

Finally, for $t\in (u)^k_{\hom}$, define $X\in (A)^k$ with $X\leq Y$ by merging  classes in $Y$ according to how the corresponding classes in $u$ were merged to build $t$. Let  $a = s(k, X)$ and note that  $n(X) = n$. If $b_t$ is the unique element of $T(a,k,n)$ corresponding to $t$, then $b_X=b_t$ and therefore $t=t(b_X)$. Thus, $t\in C^n_{j_0}$, by the choice of $A$. Since $t$ was arbitrary, we get  $(u)^k_{\hom}\subseteq C^n_{j_0}$. A contradiction. This completes the proof.
\end{proof}


\begin{thebibliography}{00}
	\bibitem{CarSim} Carlson  T. J. and Simpson S. G., \textit{A dual form of Ramsey's theorem}. Adv. Math. 53 (1984), 265--290.
	\bibitem{DM} Dobrinen, N and Mijares, J.G. \textit{Topological Ramsey spaces and metrically Baire sets}, J. Combin. Theory Ser. A 135(2015) 161--180..
	\bibitem{grahro} Graham, R.L. and  Rothschild, B.L., {\sl Ramsey's theorem for $n$-parameter sets.} Trans. Amer Math. Soc. 159 (1971), 257--292.
	\bibitem{KST} Kechris, A. S., Soki\'c, M., Todorcevic, S., \textit{Ramsey properties of finite measure algebras and topological dynamics of the group of measure preserving automorphisms: some results and an open problem}, Foundations of mathematics, 69--85, Contemp. Math., 690, Amer. Math. Soc., Providence, RI, 2017.
	\bibitem{todo} Todorcevic, S., \textit{Introduction to Ramsey spaces}. Princeton University Press, Princeton, New Jersey, 2010.
\end{thebibliography}
\end{document}